\documentclass[1p]{elsarticle}

\usepackage{amsfonts,amsmath,amssymb,amsthm,bbm}
\usepackage{mathrsfs,mathtools,stmaryrd,wasysym}
\usepackage{xspace,mydef}
\usepackage{esint}
\usepackage{xcolor}
\usepackage{hyperref,cleveref}
\DeclareMathAlphabet{\mathpzc}{OT1}{pzc}{m}{it}

\newcommand{\TheTitle}{Approximation of higher-order powers of the spectral fractional Laplacian via polyharmonic extension}

\theoremstyle{definition}

\newtheorem{remark}{Remark}

\theoremstyle{plain}
\newtheorem{theorem}{Theorem}
\newtheorem{proposition}{Proposition}

\begin{document}

\title{\TheTitle}

\author[1]{Enrique Ot\'arola}
\ead{enrique.otarola@usm.cl}
\affiliation[1]{
  organization = {Departamento de Matem\'atica, Universidad T\'ecnica Federico Santa Mar\'ia},
  city = {Valpara\'iso},
  country = {Chile}
}

\author[2]{Abner J.~Salgado}
\ead{asalgad1@utk.edu}
\affiliation[2]{
  organization = {Department of Mathematics, University of Tennessee},
  city ={Knoxville, TN},
  postcode = {37996},
  country={USA}
}

\begin{keyword}
fractional diffusion, nonlocal operators, spectral fractional Laplacian, finite elements.
\MSC[2020]{35R11,     
65N12,                
65N30.                
}
\end{keyword}

\begin{abstract}
We use the polyharmonic extension approach to develop a numerical technique for discretizing higher-order powers of the spectral fractional Laplacian $\Laps$ with $s \in (1,2)$.
\end{abstract}

\maketitle

\section{Introduction}
\label{sec:Into}

In their seminal 2007 work \cite{MR2354493}, Caffarelli and Silvestre introduced the harmonic extension approach to localize, in $\Real^d$, the fractional Laplacian of order between zero and one. While these ideas were to some extent already known \cite{MR247668}, it is difficult to overstate the impact of \cite{MR2354493}.  Any attempt to provide a comprehensive list of extensions and applications would inevitably overlook important contributions. Therefore, we restrict the discussion to noting that this work opened the door to a PDE-based approach for approximating solutions to certain fractional diffusion problems; see \cite{MR3348172,MR3989717}.

The original work \cite{MR2354493} and the numerical developments \cite{MR3348172,MR3989717} were limited to fractional powers of order between zero and one. One variant of the harmonic extension approach is the use of \emph{polyharmonic extensions} to study higher-order fractional powers of the Laplacian, that is, $\Laps$ for $s>1$. This was initiated in \cite{1302.4413}\footnote{see, however, the comments in \cite{MR4429579} about this work.} and further developed in \cite{MR3773814,MR4429579,MusinaNazarov,MR4742773}.

In this work, we inaugurate the numerical analysis of problems involving higher-order fractional Laplacians. We focus on
$s \in (1,2)$ and provide a numerical counterpart to \cite{MR3773814,MR4429579,MusinaNazarov,MR4742773}. Specifically, we develop a PDE-based approach for the numerical resolution of the following problem. Assume $d \in \Natural$, $s \in (1,2)$, and let $\Omega \subset \Real^d$ be a convex polytope. Given $f : \Omega \to \Real$, find $u : \bar\Omega \to \Real$ such that
\begin{equation}
\label{eq:TheProblem}
  \Laps u =f \text{ in } \Omega.
\end{equation}
Here, $\Laps$ denotes the \emph{spectral} fractional Dirichlet Laplacian.

In our presentation, we use the context and notation from \cite{MR3348172,MR3989717} and draw on several results from these works.

\vspace{-0.3cm}

\section{Polyharmonic extensions}
\label{sec:MusinaNazarov}

Let $\calH$ be a separable Hilbert space, and let $\calL : \mathcal{D}(\calL) \to \calH$ be a linear, unbounded, positive, self-adjoint operator with discrete spectrum; that is,
there exists $\{(\lambda_k,\varphi_k)\}_{k\in \Natural}$ $\subset \Real_+ \times \calH$ such that $\{\varphi_k\}_{k\in \Natural}$ is an orthonormal basis of $\calH$ and
$
  \calL \varphi_k = \lambda_k \varphi_k
$
for every $k \in \Natural$. For $s \in \Real$, we define the $s$-th power of $\calL$ in the sense of \emph{spectral theory} as
\[
  \calL^s w = \sum_{k =1}^\infty \lambda_k^s W_k \varphi_k,
  \qquad
  W_k = (w,\varphi_k)_\calH,
  \qquad
  k \in \Natural.
\]
For $s > 0$, we define the Hilbert space
\[
  \calH^s_\calL \coloneqq \calD(\calL^{s/2}) = \left\{ w = \sum_{k=1}^\infty W_k \varphi_k \in \calH \ : \ \| w \|_{\calH^s_\calL}^2
  \coloneqq
  (w,w)_{\calH^s_\calL}
  =
  \sum_{k=1}^\infty \lambda_k^s |W_k|^2 < \infty \right\},
\]
where $(v,w)_{\calH^s_\calL} \coloneqq (\calL^{s/2} v, \calL^{s/2} w)_{\calH}$ for $v,w \in \calH^s_\calL$. We identify $\left(\calH^{s}_\calL \right)'$ with $\calH^{-s}_\calL = \{ \calL^s v: v \in \calH^s_\calL \}$ using the identity $\langle \calL^s v,w \rangle =  (\calL^{s/2} v, \calL^{s/2} w)_{\calH}$ for $v,w \in \calH^s_\calL$. Therefore, we can write elements $f \in \calH^{-s}_\calL$ as follows:
\[
 f = \sum_{k=1}^\infty F_k \varphi_k:
\qquad
\| f \|_{\calH^{-s}_\calL} \coloneqq \left( \sum_{k=1}^\infty \lambda_k^{-s} |F_k|^2 \right)^{\frac12} < \infty.
\]
This extends the definition of the norm $\| \cdot \|_{\calH^s_\calL}$ to $s < 0$. Note that $\calL^s : \calH^s_\calL \to \calH^{-s}_\calL$ is an isometry with inverse $\calL^{-s}$.

We now follow \cite[Theorem 1.2]{MusinaNazarov} to describe the polyharmonic extension approach for $s \in (1,2)$. Let
\[
  \frakb \coloneqq 3 -2s \in (-1,1).
\]
We introduce the weighted space $L^2(y^\frakb, \Real_+; \calH)$, with $y \mapsto |y|^\frakb \in A_2(\Real)$, and define
\begin{align*}
 H^1_{\calL}(y^\frakb, \Real_+; \calH) \coloneqq H^1(y^\frakb, \Real_+; \calH) \cap L^2(y^\frakb, \Real_+; \calH_{\calL}),
 \\
 H^2_{\calL}(y^\frakb, \Real_+; \calH) \coloneqq H^2(y^\frakb, \Real_+; \calH) \cap L^2(y^\frakb, \Real_+; \calH_{\calL}^2).
\end{align*}
On the weighted space $L^2(y^\frakb, \Real_+; \calH)$, we define the (unbounded) operators
\[
  \polD_\frakb w \coloneqq -\frac1{y^\frakb} \diff_y \left( y^\frakb \diff_y w \right) = - \diff_{yy}^2 w - \frac{\frakb}{y} \diff_y w,
  \qquad
  \polL_\frakb w \coloneqq \polD_\frakb w + \calL w.
\]

\begin{theorem}[extension]
\label{thm:MusinaNazarov}
  Let $s \in (1,2)$ and $f \in \calH^{-s}_\calL$. If $\fraku \in H^2_{\calL}(y^\frakb, \Real_+; \calH)$ solves
  \begin{equation}
  \label{eq:ExtensionH}
    \polL_\frakb^2 \fraku = 0 \text{ in } \Real_+ \times \calH,
    \qquad
    \lim_{y\downarrow 0} y^{\frakb} \diff_y \fraku = 0 \text{ in } \calH,
    \qquad
    -\lim_{y\downarrow 0} y^\frakb \diff_y \polL_\frakb \fraku = d_s f \text{ in } \calH_{\calL}^{-s},
  \end{equation}
  then $u \coloneqq \fraku(0) \in \calH^s_\calL$ satisfies $\calL^s u = f$. Here, $d_s \coloneqq 2^\frakb \frac{\Gamma(2-s)}{\Gamma(s)}$. Moreover, if
  \begin{equation}
  \label{eq:PsiAndPsik}
    c_s \coloneqq \frac{2^{1-s}}{\Gamma(s)}, \qquad \qquad \psi(z) \coloneqq c_s z^s K_s(z), \qquad \psi_k(y) \coloneqq \psi( \sqrt{\lambda_k} y ),
  \end{equation}
  where $K_s$ denotes the modified Bessel function of the second kind of order $s$, then
  \begin{equation}
  \label{eq:SolutionRepresentation}
    \fraku(y) = \sum_{k=1}^\infty U_k \psi_k(y) \varphi_k, \qquad U_k = (u,\varphi_k)_\calH.
  \end{equation}
\end{theorem}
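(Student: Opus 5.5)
We will reduce the problem to a family of scalar ODEs by expanding in the eigenbasis $\{\varphi_k\}$. Write $\fraku(y)=\sum_k \fraku_k(y)\varphi_k$ with $\fraku_k(y)=(\fraku(y),\varphi_k)_\calH$. Since $\calL$ is diagonal in this basis, $\polL_\frakb$ acts on the $k$-th coefficient as $L_k w \coloneqq -w''-\frac{\frakb}{y}w'+\lambda_k w$. Thus \eqref{eq:ExtensionH} decouples into the fourth-order problems
\[
L_k^2 \fraku_k = 0 \text{ on } (0,\infty),\qquad \lim_{y\downarrow0} y^\frakb \fraku_k'(y)=0,\qquad -\lim_{y\downarrow 0}y^\frakb (L_k\fraku_k)'(y) = d_s F_k,
\]
together with the integrability condition inherited from $\fraku\in H^2_\calL(y^\frakb,\Real_+;\calH)$. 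By scaling $z=\sqrt{\lambda_k}\,y$, each problem becomes a fixed problem with $\lambda_k$ replaced by $1$, so it suffices to study $L_1$.

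\emph{Step 1: the decaying kernel of $L_1^2$.} The substitution $w=z^{\alpha}v$ with $\alpha=(1-\frakb)/2=s-1$ turns $L_1 w=0$ into the modified Bessel equation of order $s-1$. The kernel of $L_1$ is therefore spanned by $z^{s-1}K_{s-1}(z)$ and $z^{s-1}I_{s-1}(z)$. The kernel of $L_1^2$ adds two more functions, obtained from the Bessel identities $\frac{d}{dz}(z^\nu K_\nu)=-z^\nu K_{\nu-1}$ and $L_1(z^{s}K_s)\propto z^{s-1}K_{s-1}$. One checks directly that $\psi(z)=c_s z^sK_s(z)$ satisfies $L_1\psi = \kappa\, z^{s-1}K_{s-1}(z)$ for an explicit constant $\kappa$, and hence $L_1^2\psi=0$. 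The $H^2$ weighted integrability at infinity excludes both $I$-type solutions. What remains is the two-dimensional space spanned by $z^{s-1}K_{s-1}$ and $z^sK_s$.

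\emph{Step 2: boundary conditions at $0$.} This is the main computational step. Use the small-$z$ expansions of $K_\nu$, namely $K_\nu(z)\sim \tfrac12\Gamma(\nu)(z/2)^{-\nu}$ plus the next terms. They give:
\begin{itemize}
\item $z^sK_s\to 2^{s-1}\Gamma(s)$, so $\psi(0)=1$ by the choice of $c_s$;
\item $z^\frakb (z^sK_s)' = -z^{\frakb+s}K_{s-1} = O(z^{\frakb+1})\to0$.
\end{itemize}
Meanwhile, $z^{s-1}K_{s-1}$ tends to a nonzero constant when $s>1$, but $z^\frakb (z^{s-1}K_{s-1})' = -z^{\frakb+s-1}K_{s-2}$, and $K_{s-2}=K_{2-s}\sim z^{-(2-s)}$. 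This yields the limit $-z^{\frakb+2s-3}\cdot\text{const}=-\text{const}\neq0$. So the first Neumann-type condition forces the coefficient of $z^{s-1}K_{s-1}$ to vanish. Hence $\fraku_k(y)=A_k\psi_k(y)$, and evaluating at $y=0$ gives $A_k=\fraku_k(0)=U_k$. This proves \eqref{eq:SolutionRepresentation}.

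\emph{Step 3: the third condition.} Compute $-\lim y^\frakb (L_k\psi_k)'$. By scaling this equals $\lambda_k^{(1+\frakb)/2}\lambda_k\cdot(-\lim z^\frakb (L_1\psi)')$, and the power of $\lambda_k$ is $\lambda_k^{s}$, since $(1+\frakb)/2+1=2-s+1$… This has to be reconciled carefully: the factor $(d/dy)$ contributes $\sqrt{\lambda_k}$, $y^\frakb$ contributes $\lambda_k^{-\frakb/2}$, and $L_k$ contributes $\lambda_k$. In total this gives $\lambda_k^{(1-\frakb)/2+1}=\lambda_k^{s}$. Using $L_1\psi=\kappa z^{s-1}K_{s-1}$ and the limit computed in Step 2, the constant is $\kappa\cdot 2^{\frakb}\Gamma(2-s)/(\ldots)$. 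Checking that it equals exactly $d_s$ with $c_s$ as given is the bookkeeping obstacle. With that constant in hand, the condition reads $d_s\lambda_k^sU_k=d_sF_k$, i.e.\ $\calL^su=f$. Finally, $u\in\calH^s_\calL$ follows because $f\in\calH^{-s}_\calL$ and $\calL^{-s}$ is an isometry.

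A rigorous justification of the coefficientwise reduction, including the interchange of the limits $y\downarrow0$ with the series, uses the assumed regularity $\fraku\in H^2_\calL$ and dominated convergence, with the uniform-in-$k$ bounds available for the scaled profile $\psi$.
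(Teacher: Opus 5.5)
Your proof is correct. It takes a genuinely different route from the paper, which gives no argument of its own and imports the statement, the profile $\psi$ and the constants from Musina--Nazarov (Theorem 1.2).

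You test everything against a fixed $\varphi_k$, rescale, and pick out the decaying, Neumann-compatible element of the four-dimensional kernel of $L_1^2$, spanned by $z^{s-1}K_{s-1}$, $z^sK_s$, $z^{s-1}I_{s-1}$ and $z^sI_s$. Since $\varphi_k$ is a bounded functional on $\calH$ and on $\calH^{-s}_\calL$, you need no interchange of limits with the series and no dominated convergence. You only need that distributional solutions of $L_k^2w=0$ on $(0,\infty)$ are classical.

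The two items you left as bookkeeping close cleanly:
\begin{itemize}
\item The function $z^sK_s$ solves $w''+\frac{1-2s}{z}w'-w=0$ and satisfies $(z^sK_s)'=-z^sK_{s-1}$. Hence $L_1(z^sK_s)=-\frac{2}{z}(z^sK_s)'=2z^{s-1}K_{s-1}$, so $\kappa=2c_s$. This is the same identity behind \eqref{eq:JkIk}.
\item Consequently $-\lim_{z\downarrow0}z^{\frakb}(L_1\psi)'=2c_s\,2^{1-s}\Gamma(2-s)=2^{3-2s}\Gamma(2-s)/\Gamma(s)=d_s$.
\item Discard your first exponent count, $(1+\frakb)/2+1=3-s$; the second one, $(1-\frakb)/2+1=s$, is correct.
\end{itemize}

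Your route gives a short, self-contained proof of exactly what is stated: any solution in the class equals the $\psi_k$-series, and its trace solves $\calL^s u=f$. It also verifies the normalizations $c_s$ and $d_s$ explicitly. The citation instead brings the rest of the Musina--Nazarov framework, which the paper relies on later and which your argument does not provide: existence of the extension in $H^2_\calL(y^\frakb,\Real_+;\calH)$, the trace inequality used in the truncation estimate, and the concrete description of the energy spaces.
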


\subsection{Higher powers of the spectral fractional Laplacian}

We now specialize \Cref{thm:MusinaNazarov} to the spectral fractional Laplacian. Let $d \in \Natural$ and $\Omega \subset \Real^d$ be a bounded convex polytope. Set $\calH = \Ldeux$ and $\calL = -\LAP_x$, the Dirichlet Laplacian. Then, $\calH^s_\calL = \polH^s(\Omega)$ and, recalling that $\calC \coloneqq \Omega \times (0,\infty)$,
\[
  L^2(y^\frakb,\Real_+;\calH) = L^2(y^\frakb,\Real_+;\Ldeux) = L^2(y^\frakb, \calC ),
  \quad
  \| w \|_{L^2(y^\frakb,\calC)}^2 \coloneqq \int_0^\infty y^\frakb \int_\Omega |w|^2 \diff x \diff y.
\]
The spaces $\calH_{\calL}$ and $\calH_{\calL}^2$ are as follows: $\calH_{\calL} = H_0^1(\Omega)$ and $\calH_{\calL}^2 = H^2(\Omega) \cap H_0^1(\Omega)$.
Define $\partial_L \calC \coloneqq \partial\Omega \times (0,\infty)$. Then, \cite[Section 5.3]{MusinaNazarov}
\begin{align*}
 H^1_{\calL}(y^\frakb, \Real_+; \calH) & =  \left\{ w \in H^1(y^\frakb, \calC) \ : \:  \ w_{|\partial_L\calC} = 0 \right\},
 \\
 H^2_{\calL}(y^\frakb, \Real_+; \calH) & = \left\{ w \in H^1_{\calL}(y^\frakb, \Real_+; \calH) \ : \:  y^{\frakb} \nabla w \in H^1(y^{-\frakb},\calC) \right\}.
\end{align*}
Note that $\polL_\frakb w = \polD_\frakb w + \calL w = - \diff_{yy}^2 w - \frakb y^{-1} \diff_y w - \Delta_{x} w = - y^{-\frakb}\mathrm{div}(y^{\frakb} \nabla w)$. Define
\[
  \HtwoLb(y^\frakb,\calC) \coloneqq \left\{ w \in H^2(y^\frakb, \calC) \ : \: \lim_{y \downarrow0} y^{\frakb}\partial_y w = 0, \ w_{|\partial_L\calC} = \partial_\bfn w_{|\partial_L\calC} = 0 \right\}.
\]
We endow this space with the norm \cite[Section 5.3]{MusinaNazarov}
\[
  \| w \|_{\HtwoLb(y^\frakb,\calC)} \coloneqq \left\| \polL_\frakb w \right\|_{L^2(y^\frakb,\calC)} = \left( \int_0^{\infty}\int_{\Omega} y^{-\frakb} |\mathrm{div}(y^{\frakb} \nabla w)|^2 \mathrm{d}x \mathrm{d}y\right)^{\frac12}.
\]

A weak formulation for \eqref{eq:ExtensionH} is as follows: find $\fraku \in \HtwoLb(y^\frakb,\calC)$ such that
\begin{equation}
\label{eq:ExtensionWeakForm}
  \int_\calC y^\frakb \polL_\frakb \fraku \polL_\frakb v \diff x \diff y = d_s \langle f, \Tr_\Omega v\rangle_{\calH_{\calL}^{-s},\calH_{\calL}^s} \quad \forall v \in \HtwoLb(y^\frakb, \calC).
\end{equation}
\Cref{thm:MusinaNazarov} then shows that $u \coloneqq \Tr_\Omega \fraku = (-\Delta)^{-s} f$.

\section{Regularity and truncation}

We now justify truncating $\calC$ to $\calC_\calY \coloneqq \Omega \times (0,\calY)$ for $\calY>0$. This relies on \eqref{eq:SolutionRepresentation}, which coincides with \cite[(2.24)]{MR3348172} and \cite[(4.1)]{MR3989717}, taking into account that now $s \in(1,2)$.

\begin{proposition}[exponential decay]
\label{prop:ExpDecay}
  For every $a, b > 0$, $a<b$, the function \eqref{eq:SolutionRepresentation} satisfies
  \begin{equation}
  \label{eq:ExtensionYonInterval}
    \| \polL_\frakb \fraku \|_{L^2(y^\frakb, \Omega \times (a, b))} \leq \sup_{k =1}^\infty I_k(a,b) \| f \|_{\polH^{-s}(\Omega)},
    \quad
    I_k(a,b)^2 \coloneqq 4 c_s^2 \int_{\sqrt{\lambda_k}a}^{\sqrt{\lambda_k}b} z K_{s-1}^2(z) \diff z.
  \end{equation}
  In particular, for every $\calY \geq \tfrac1{\sqrt{\lambda_1}}$, we have
  \begin{equation}
  \label{eq:ExtensioYtruncation}
    \| \polL_\frakb \fraku \|_{L^2(y^\frakb, \Omega \times (\calY, \infty))} \lesssim \exp\left( -  \frac{ \sqrt{\lambda_1} \calY}2 \right) \| f \|_{\polH^{-s}(\Omega)}.
  \end{equation}
\end{proposition}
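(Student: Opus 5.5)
Since $\polL_\frakb$ acts diagonally on the eigenbasis, we use the representation \eqref{eq:SolutionRepresentation} to compute $\polL_\frakb \fraku$ mode by mode. For each $k$ we have $\polL_\frakb(\psi_k\varphi_k) = (\polD_\frakb \psi_k + \lambda_k\psi_k)\varphi_k$, and with $z=\sqrt{\lambda_k}y$ this equals $\lambda_k\bigl(-\psi''(z) - \frakb z^{-1}\psi'(z) + \psi(z)\bigr)\varphi_k$.

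The first task is to evaluate this one-dimensional Bessel-type expression for $\psi(z)=c_s z^sK_s(z)$. We use the standard identities $(z^\nu K_\nu)' = -z^\nu K_{\nu-1}$ and $K_{\nu-2}(z) = K_\nu(z) - \tfrac{2(\nu-1)}{z}K_{\nu-1}(z)$. They give $\psi' = -c_s z^sK_{s-1}$. Differentiating $z\cdot z^{s-1}K_{s-1}$ yields $\psi'' = -c_s z^{s-1}K_{s-1} + c_s z^sK_{s-2}$. Substituting, and using $\frakb = 3-2s$, we get
\[
-\psi''-\tfrac{\frakb}{z}\psi'+\psi = c_s\bigl[z^{s-1}K_{s-1} - z^sK_{s-2} + (3-2s)z^{s-1}K_{s-1} + z^sK_s\bigr] = c_s\bigl[(4-2s)z^{s-1}K_{s-1} + 2(s-1)z^{s-1}K_{s-1}\bigr] = 2c_s z^{s-1}K_{s-1}(z).
\]
Hence $\polL_\frakb\fraku = \sum_k 2c_s\lambda_k U_k(\sqrt{\lambda_k}y)^{s-1}K_{s-1}(\sqrt{\lambda_k}y)\varphi_k$. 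This computation is the main (though mechanical) obstacle: one must track the index shifts carefully, and the cancellation is exactly what the choice $\frakb=3-2s$ produces.

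Next, by orthonormality of $\{\varphi_k\}$,
\[
\|\polL_\frakb\fraku\|^2_{L^2(y^\frakb,\Omega\times(a,b))} = \sum_k 4c_s^2\lambda_k^2|U_k|^2\int_a^b y^{3-2s}(\sqrt{\lambda_k}y)^{2s-2}K_{s-1}^2(\sqrt{\lambda_k}y)\,\mathrm{d}y.
\]
The change of variables $z=\sqrt{\lambda_k}y$ turns the integrand into $y^{3-2s}\lambda_k^{s-1}y^{2s-2}K_{s-1}^2 = \lambda_k^{s-1} y K_{s-1}^2$ with $y\,\mathrm{d}y = \lambda_k^{-1}z\,\mathrm{d}z$. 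Each term is therefore $\lambda_k^{s}|U_k|^2 I_k(a,b)^2$. Since $\lambda_k^s U_k = F_k$, we have $\lambda_k^s|U_k|^2=\lambda_k^{-s}|F_k|^2$, and bounding $I_k$ by its supremum gives \eqref{eq:ExtensionYonInterval}.

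For \eqref{eq:ExtensioYtruncation}, take $b=\infty$. Since $\lambda_k\ge\lambda_1$, we have $I_k(\calY,\infty)\le I_1(\calY,\infty)$, so it suffices to bound $\int_{\sqrt{\lambda_1}\calY}^\infty zK_{s-1}^2(z)\,\mathrm{d}z$ for $\sqrt{\lambda_1}\calY\ge1$. By the asymptotic bound $K_\nu(z)\lesssim z^{-1/2}e^{-z}$ for $z\ge1$ (uniform for $\nu=s-1\in(0,1)$), the integrand is $\lesssim e^{-2z}$. The integral is then $\lesssim e^{-2\sqrt{\lambda_1}\calY}$, and taking square roots gives a decay of $e^{-\sqrt{\lambda_1}\calY}$, which is even better than the stated $e^{-\sqrt{\lambda_1}\calY/2}$.
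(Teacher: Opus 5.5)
Your proposal is correct and follows the paper's proof. You expand $\polL_\frakb \fraku$ in the eigenbasis and use $(\tilde{\polD}_\frakb+1)\psi = 2c_s z^{s-1}K_{s-1}(z)$, which you derive explicitly where the paper quotes it. You then change variables $z=\sqrt{\lambda_k}y$, use $\lambda_k^s U_k = F_k$, and conclude with $I_k(\calY,\infty)\le I_1(\calY,\infty)$ and the exponential decay of $K_{s-1}$. The only difference is at the end: the paper writes $e^{-2z}\le e^{-\sqrt{\lambda_1}\calY}e^{-z}$ and integrates over $(0,\infty)$, losing half the exponent, whereas your bound $zK_{s-1}^2(z)\lesssim e^{-2z}$ on $[1,\infty)$ keeps the full decay and gives the valid, sharper rate $e^{-\sqrt{\lambda_1}\calY}$.
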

\begin{proof}
  Owing to \eqref{eq:SolutionRepresentation} and the orthogonality properties of $\{\varphi_k\}_{k=1}^\infty$, we have
  \begin{align*}
    \int_a^b y^\frakb \int_\Omega |\polL_\frakb \fraku|^2 \diff x \diff y &=
    \int_a^b y^\frakb \int_\Omega \left| \sum_{k=1}^\infty U_k \left[ \polD_\frakb \psi_k(y) + \lambda_k \psi_k(y) \right] \varphi_k(x) \right|^2 \diff x \diff y
    \\
    &= \sum_{k=1}^\infty |U_k|^2 \int_a^b y^\frakb \left| (\polD_\frakb + \lambda_k) \psi_k \right|^2 \diff y
      = \sum_{k=1}^\infty \lambda_k^s|U_k|^2 J_k(a,b)^2,
  \end{align*}
  where $J_k(a,b)^2 \coloneqq \lambda_k^{-s} \int_{a}^{b} y^\frakb | (\polD_\frakb + \lambda_k) \psi_k|^2 \diff y$. We now employ the change of variable $z = \sqrt{\lambda_k}y$, formula \cite[(3.2)]{MusinaNazarov}, denote by $\tilde{\polD}_\frakb$ the same operator as $\polD_\frakb$ but with respect to the variable $z$, use \cite[(10.29.1)]{NIST:DLMF}, or alternatively \cite[(3.3)]{MusinaNazarov}, and finally recall that $\psi$ is defined in \eqref{eq:PsiAndPsik} to obtain
  \begin{equation}
  \label{eq:JkIk}
    J_k(a,b)^2 =  \int_{\sqrt{\lambda_k}a}^{\sqrt{\lambda_k}b} z^\frakb \left| (\tilde{\polD}_\frakb + 1) \psi \right|^2 \diff z = 4 c_s^2 \int_{\sqrt{\lambda_k}a}^{\sqrt{\lambda_k}b} z K_{s-1}^2(z) \diff z = I_k(a,b)^2.
  \end{equation}
  Finally, if we set $a = \calY$, $b = \infty$, and use the exponential decay of $K_s$ \cite[\S2.5]{MR3348172}, we obtain
  \[
    I_k(\calY,\infty)^2 \leq I_1( \calY ,\infty)^2 \leq 4c_s^2 \exp(-\sqrt{\lambda_1}\calY) \int_0^\infty z^{\max\{ 3-2s, 0 \} } \exp(-z) \diff z
    \lesssim \exp(-\sqrt{\lambda_1}\calY),
  \]
  which implies \eqref{eq:ExtensioYtruncation}.
\end{proof}


Define
\[
  \HtwoLbY(y^\frakb,\calC_\calY) \coloneqq \left\{ w \in H^2(y^\frakb, \calC_\calY) \ : \
    \begin{array}{cc}
      \lim_{y \downarrow0} y^{\frakb}\partial_y w = 0, &
      w_{|\partial_L\calC} = \partial_\bfn w_{|\partial_L\calC} = 0
      \\ w_{|y=\calY} = \partial_y w|_{y =\calY} = 0 &
    \end{array}
  \right\}.
\]
If $\fraku_\calY \in \HtwoLbY(y^\frakb,\calC_\calY)$ solves the problem
\begin{equation}
\label{eq:ExtensionWeakFormY}
  \int_{\calC_\calY} y^\frakb \polL_\frakb \fraku_\calY \polL_\frakb v \diff x \diff y = d_s \langle f, \Tr_\Omega v\rangle_{\calH_{\calL}^{-s},\calH_{\calL}^s} \quad \forall v \in \HtwoLbY(y^\frakb, \calC_\calY),
\end{equation}
then $u_\calY \coloneqq \Tr_\Omega \fraku_\calY$ is an approximation of $u$, the soluton to \eqref{eq:TheProblem}, in the following sense.

\begin{proposition}[exponential approximation]
  Let $\calY \geq \tfrac1{\sqrt{\lambda_1}}$. If $\fraku$ and $\fraku_\calY$ solve \eqref{eq:ExtensionWeakForm} and \eqref{eq:ExtensionWeakFormY}, respectively, then we have
  \[
    \| u - u_\calY \|_{\polH^s(\Omega)} = \| \Tr_\Omega \fraku - \Tr_\Omega \fraku_\calY \|_{\polH^s(\Omega)}
    \lesssim \| \fraku - \fraku_\calY \|_{H^2(y^\frakb,\calC)} \lesssim \exp\left( -\frac{\sqrt{\lambda_1}\calY}4  \right) \| f \|_{\polH^{-s}(\Omega)}.
  \]
\end{proposition}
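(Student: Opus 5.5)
The proof splits into two inequalities: a trace estimate, and a Céa-type best-approximation argument combined with a cutoff of $\fraku$ near $y = \calY$.

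\emph{Trace bound.} We use the representation \eqref{eq:SolutionRepresentation}. For any $w = \sum_k W_k(y)\varphi_k$ in the energy space, $\|\polL_\frakb w\|^2_{L^2(y^\frakb,\calC)} = \sum_k \int_0^\infty y^\frakb |(\polD_\frakb+\lambda_k)W_k|^2 \diff y$. For each fixed $k$, the minimizer of this one-dimensional energy, subject to $W_k(0)=\xi$ and $\lim y^\frakb W_k'(0)=0$, is $\xi\psi_k$. The computation in the proof of \Cref{prop:ExpDecay} with $a=0$, $b=\infty$ shows that its energy equals $\lambda_k^s|\xi|^2 I_k(0,\infty)^2$. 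Here $I_k(0,\infty)^2 = 4c_s^2\int_0^\infty zK_{s-1}^2$ is a finite positive constant independent of $k$, since $|s-1|<1$. This gives $\|\Tr_\Omega w\|_{\polH^s(\Omega)} \lesssim \|\polL_\frakb w\|_{L^2(y^\frakb,\calC)}$. We apply it to $w = \fraku - \fraku_\calY$, with $\fraku_\calY$ extended by zero; the extension is admissible because $\fraku_\calY$ and $\partial_y\fraku_\calY$ vanish at $y=\calY$. The norm is equivalent to the $H^2$ norm by \cite[Section 5.3]{MusinaNazarov}.

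\emph{Galerkin orthogonality and Céa.} The extension by zero embeds $\HtwoLbY(y^\frakb,\calC_\calY)$ into $\HtwoLb(y^\frakb,\calC)$. Problem \eqref{eq:ExtensionWeakFormY} is then the Galerkin projection of \eqref{eq:ExtensionWeakForm} onto this closed subspace, with respect to the inner product $(\polL_\frakb\cdot,\polL_\frakb\cdot)_{L^2(y^\frakb,\calC)}$. Hence
\[
\|\polL_\frakb(\fraku-\fraku_\calY)\|_{L^2(y^\frakb,\calC)} \leq \|\polL_\frakb(\fraku - v)\|_{L^2(y^\frakb,\calC)} \quad \forall v \in \HtwoLbY(y^\frakb,\calC_\calY).
\]

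\emph{Cutoff construction (main obstacle).} We choose $v=\rho\fraku$, where $\rho\in C^\infty$ depends only on $y$, equals $1$ on $[0,\calY/2]$, vanishes on $[\calY,\infty)$, and satisfies $|\rho'|\lesssim \calY^{-1}$, $|\rho''|\lesssim\calY^{-2}$. Then
\[
\polL_\frakb((1-\rho)\fraku) = (1-\rho)\polL_\frakb\fraku + 2\rho'\partial_y\fraku + \left(\rho''+\tfrac{\frakb}{y}\rho'\right)\fraku ,
\]
and every term is supported in $y\ge \calY/2$. The first term is bounded by \eqref{eq:ExtensioYtruncation} with $\calY/2$ in place of $\calY$, which yields the rate $\exp(-\sqrt{\lambda_1}\calY/4)$; this explains the exponent in the statement. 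The lower-order terms need bounds on $\|\fraku\|$ and $\|\partial_y\fraku\|$ in $L^2(y^\frakb,\Omega\times(\calY/2,\infty))$. We obtain these modewise as in \Cref{prop:ExpDecay}. The relevant quantities are $\lambda_k^{-s}\int y^\frakb|\psi_k|^2$ and $\lambda_k^{-s}\int y^\frakb|\psi_k'|^2$; after the change of variables $z=\sqrt{\lambda_k}y$ they become integrals of $z^{\frakb}|z^sK_s(z)|^2$ and $z^\frakb|z^sK_{s-1}(z)|^2$ over $(\sqrt{\lambda_k}\calY/2,\infty)$. We use $\psi'(z)=-c_s z^sK_{s-1}(z)$ to identify the second integrand. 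These integrals carry extra factors $\lambda_k^{-1}$ and $\lambda_k^{-1/2}$, which are bounded by powers of $\lambda_1^{-1}$. The derivative factors $\calY^{-1}$ are bounded by $\sqrt{\lambda_1}$ because $\calY\ge 1/\sqrt{\lambda_1}$.

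The exponential tail of $K_\nu$ \cite[\S2.5]{MR3348172} then gives a bound $\lesssim \exp(-\sqrt{\lambda_1}\calY/2)$ on each squared term, up to polynomial factors in $z$, since $z^{2s+\frakb}=z^3$. Those polynomial factors are absorbed by sacrificing part of the exponent, e.g. $z^3e^{-2z}\lesssim e^{-z}$. Taking square roots and summing over $k$ against $\lambda_k^s|U_k|^2=\|f\|^2_{\polH^{-s}(\Omega)}$, up to the constant $d_s$, gives the final bound.
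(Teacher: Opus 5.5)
Your proposal is correct and follows essentially the paper's route. The paper's second estimate is precisely the adaptation of \cite[Lemma 3.3]{MR3348172} that you carry out: Galerkin best approximation on the truncated cylinder, tested with the cutoff $\rho\fraku$ ($\rho$ switching off on $(\calY/2,\calY)$), plus the modal exponential decay behind \Cref{prop:ExpDecay}, which produces the exponent $\sqrt{\lambda_1}\calY/4$. The only difference is the trace inequality, which the paper cites from \cite[(1.5)]{MusinaNazarov}, whereas you rederive it from the modewise energy minimality of $\xi\psi_k$; this is a valid self-contained argument and even gives the sharp constant, since $I_k(0,\infty)^2 = d_s$.
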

\begin{proof}
The trace estimate follows from \cite[(1.5)]{MusinaNazarov}, and the second estimate follows from \Cref{prop:ExpDecay} and an adaptation of \cite[Lemma 3.3]{MR3348172} to compare \eqref{eq:ExtensionWeakForm} and \eqref{eq:ExtensionWeakFormY}.
\end{proof}

\section{Discretization}

The finite element discretization of \eqref{eq:ExtensionWeakFormY} is as follows. Fix $\calY \geq \lambda_1^{-1/2}$, let $\{\Triang\}_{h>0}$ be a quasiuniform family of conforming triangulations of $\Omega$, and let $\{\calM_M\}_{M \in \polN}$ be a, possibly graded, family of partitions of $[0,\calY]$ with $M = \# \calM_M$. In this setting, we define
\begin{align*}
  \calS(\calM_M) &\coloneqq \left\{ p \in C^1([0,\calY]) \ : \ p_{|I_m} \in \polP_3, \ m =1,\ldots,M, \  p(\calY) = p'(0) = p'(\calY) = 0 \right\},
  \\
  W(\Triang) &\coloneqq \left\{ v_h \in C^1(\bar\Omega) \ : v_{h|T} \in \calP \, \ \forall T \in \Triang, \ v_{h|\partial\Omega} = \partial_\bfn v_{h|\partial\Omega} = 0 \right\},
  \\
  V_{\calY,h,M} &\coloneqq  W(\Triang) \otimes \calS(\calM_M),
\end{align*}
where $\calP$ is any $H^2$--conforming finite element space, for example, the Hermite element \cite[Example 3.2.6]{MR2373954}, the Argyris element \cite[Example 3.2.10]{MR2373954}, the composite HCT element \cite[Chapter VII, Section 46]{MR1115237} \cite{MR1266026}, etc. \cite{MR3163246}. Since $\frakb \in (-1,1)$ we have, for $p \in \calS(\calM_M)$,
$ y^\frakb p'(y) \to 0$ as $ y \downarrow 0$. Thus, $V_{\calY,h,M} \subset \HtwoLbY(y^\frakb,\calC_\calY)$. Let $\fraku_{\calY,h,M} \in V_{\calY,h,M}$ be the solution to
\begin{equation}
\label{eq:DiscretehM}
  \int_{\calC_\calY} y^\frakb \polL_\frakb \fraku_{\calY,h,M} \polL_\frakb v_{h,M} \diff x \diff y = d_s \langle f, \Tr_\Omega v_{h,M}\rangle, \qquad \forall v_{h,M} \in V_{\calY,h,M}.
\end{equation}
Our numerical approximation is then $u_{\calY,h,M} \coloneqq \Tr_\Omega \fraku_{\calY,h,M}$.Conformity implies that a C\'ea-type result is immediate. In addition, we can use the Cartesian product structure of $\calC_\calY$ as in \cite[Lemma 7]{MR3989717} to obtain the next result.

\begin{theorem}[best approximation]
\label{thm:DirectionalCea}
  Let $\fraku_\calY$ solve \eqref{eq:ExtensionWeakFormY} and $\fraku_{\calY,h,M}$ solve \eqref{eq:DiscretehM}, respectively. Let $\Pi_x : H^2_0(\Omega) \to W(\Triang)$ denote a linear projection that is stable in the sense that
  \[
    \| \Pi_x w \|_{L^2(\Omega)} \lesssim \| w \|_{L^2(\Omega)}, 
    \qquad
    \| \LAP \Pi_x w \|_{L^2(\Omega)} \lesssim \| \LAP w \|_{L^2(\Omega)}
    \quad \forall w \in H^2_0(\Omega).
  \]
  Let also $\Pi_y : H^2(y^\frakb,(0,\calY)) \to \calS(\calM_M)$ be a linear projection. Then,
  \begin{align*}
    \left\| \Tr_\Omega\left( \fraku_\calY - \fraku_{\calY,h,M} \right) \right\|_{\polH^s(\Omega)}
    &\lesssim \left\| \fraku_\calY - \fraku_{\calY,h,M} \right\|_{\HtwoLb(y^\frakb,\calC_\calY)}
    \\
    &= \min_{v_{h,M} \in V_{\calY,h,M}} \left\| \fraku_\calY - v_{h,M} \right\|_{\HtwoLb(y^\frakb,\calC_\calY)}
    \\
    &\lesssim
    \| (\id-\Pi_{x}) \polD_\frakb \fraku_\calY \|_{L^2(y^\frakb,\calC_\calY)}
    + \| \LAP_x(\id-\Pi_x) \fraku_\calY \|_{L^2(y^\frakb,\calC_\calY)}
    \\
    &+ \| \polD_\frakb (\id-\Pi_y) \fraku_\calY \|_{L^2(y^\frakb,\calC_\calY)}
    + \|  (\id-\Pi_y) \LAP_x\fraku_\calY\|_{L^2(y^\frakb,\calC_\calY)}
  \end{align*}
\end{theorem}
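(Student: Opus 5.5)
The argument has three parts: the trace estimate, Galerkin orthogonality, and a tensor-product interpolant. For the first inequality, I would extend $\fraku_\calY - \fraku_{\calY,h,M}$ by zero to $y>\calY$. Because functions in $\HtwoLbY(y^\frakb,\calC_\calY)$ satisfy $w_{|y=\calY} = \partial_y w_{|y=\calY}=0$, this extension lies in $\HtwoLb(y^\frakb,\calC)$ and has the same norm. The trace estimate \cite[(1.5)]{MusinaNazarov}, already used in the exponential approximation proposition, then bounds the $\polH^s(\Omega)$ norm of the trace by the $\HtwoLb$ norm.

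The equality in the second line is Galerkin orthogonality. The bilinear form in \eqref{eq:ExtensionWeakFormY} is the inner product that induces $\|\cdot\|_{\HtwoLb(y^\frakb,\calC_\calY)}$, and $V_{\calY,h,M}\subset \HtwoLbY(y^\frakb,\calC_\calY)$ was verified before the statement. Hence $\fraku_{\calY,h,M}$ is the orthogonal projection of $\fraku_\calY$ onto $V_{\calY,h,M}$ in this inner product, which gives the minimum.

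For the last inequality I would take the comparison function $v_{h,M} = \Pi_x\Pi_y \fraku_\calY = \Pi_y\Pi_x\fraku_\calY$. The two projections act on different variables, so they commute, and the result lies in $W(\Triang)\otimes\calS(\calM_M)$. Writing $\polL_\frakb = \polD_\frakb - \LAP_x$, the triangle inequality splits the error into a $\polD_\frakb$ part and a $\LAP_x$ part. For the $\polD_\frakb$ part I would use that $\polD_\frakb$ commutes with $\Pi_x$ and decompose
\[
\polD_\frakb(\id-\Pi_x\Pi_y)\fraku_\calY = (\id-\Pi_x)\polD_\frakb\fraku_\calY + \Pi_x\polD_\frakb(\id-\Pi_y)\fraku_\calY .
\]
The $L^2$ stability of $\Pi_x$, applied for a.e.\ $y$ and integrated against $y^\frakb$, bounds the second term by $\|\polD_\frakb(\id-\Pi_y)\fraku_\calY\|_{L^2(y^\frakb,\calC_\calY)}$. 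For the $\LAP_x$ part, $\LAP_x$ commutes with $\Pi_y$, so I would write
\[
\LAP_x(\id-\Pi_x\Pi_y)\fraku_\calY = \LAP_x(\id-\Pi_x)\fraku_\calY + \LAP_x\Pi_x(\id-\Pi_y)\fraku_\calY .
\]
The $\LAP$-stability of $\Pi_x$ bounds the second term by $\|(\id-\Pi_y)\LAP_x\fraku_\calY\|_{L^2(y^\frakb,\calC_\calY)}$. Summing these gives exactly the four terms in the statement.

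The main obstacle is measure-theoretic and functional. One must justify applying $\Pi_x$ and $\Pi_y$ pointwise in the other variable, which requires the Bochner-space identifications of $H^2$-type spaces on the product $\calC_\calY$. One must also check that $\Pi_x\Pi_y\fraku_\calY$ really satisfies all the boundary conditions. I would handle this as in \cite[Lemma 7]{MR3989717}: $\fraku_\calY(\cdot,y)\in H^2_0(\Omega)$ for a.e.\ $y$, and the boundary conditions at $y=0$ and $y=\calY$ are inherited from $\Pi_y$ mapping into $\calS(\calM_M)$. Finally, the commutation $\Pi_y\LAP_x=\LAP_x\Pi_y$ follows from linearity of $\Pi_y$ and its action in $y$ alone, by expanding in a basis in $x$.
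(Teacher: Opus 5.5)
Your proposal is correct and matches the paper's proof. Both arguments use the trace estimate, then C\'ea's lemma (with equality, because the bilinear form is the inner product inducing $\|\cdot\|_{\HtwoLb(y^\frakb,\calC_\calY)}$), and then the comparison function $\Pi_x\otimes\Pi_y\fraku_\calY$ split as $(\id-\Pi_x)\fraku_\calY+\Pi_x(\id-\Pi_y)\fraku_\calY$, i.e.\ adding and subtracting $\Pi_x\fraku_\calY$, followed by the $L^2$ and $\LAP$ stability of $\Pi_x$; your zero extension past $y=\calY$ and your remarks on applying $\Pi_x,\Pi_y$ fiberwise only make explicit what the paper's short sketch leaves implicit.
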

\begin{proof}
  We first use a trace estimate. The minimum is C\'ea's lemma. After that, it suffices to set $v_{h,M} = \Pi_x \otimes \Pi_y \fraku_\calY$, add and subtract $\Pi_x \fraku_\calY$, and use the stability of $\Pi_x$.
\end{proof}

\begin{remark}[regularity and rate of approximation]
  The derivation of error estimates for the solution of \eqref{eq:ExtensionWeakForm} requires estimating the interpolation errors described in \Cref{thm:DirectionalCea}. To do this, one uses the explicit representation  \eqref{eq:SolutionRepresentation} of the solution as it has been done in \cite{MR3348172,MR3989717}. This process is further complicated by the fact that fourth-order equations have a limited regularity shift. For instance, \cite[Corollary 7.3.2.5]{MR3396210} shows that, if $d=2$, $\Omega$ is a convex polygon, and $\LAP^2 \Phi \in L^2(\Omega)$, then $\Phi \in H^3(\Omega) \cap H^2_0(\Omega)$. This, however, does not imply that $u\in H^4(\Omega)$.
\end{remark}

\begin{remark}[nonconforming methods]
  The implementation of $H^2$-conforming methods in general domains is a complicated endeavor, and not every finite element library offers ready-made elements for this purpose. One possible solution is to use nonconforming methods such as discrete Kirchhoff triangles \cite[\S8.2.1]{MR3309171}, virtual elements \cite{MR4855619}, interior penalty methods \cite{MR431742,MR2142191}, or other approaches. Obtaining analogues to \Cref{thm:DirectionalCea} for these methods should follow standard techniques and encounter the usual complications associated with fourth--order problems. One must simply take advantage of the Cartesian product structure of the domain and the problem.
\end{remark}


%
%
%

\section*{Funding}

EO: ANID grant FONDECYT-1220156 and by USM through USM project 2025 PI LII 25 12.
AJS: NSF grant DMS-2409918.

\bibliographystyle{plain}
\bibliography{biblio}

@article {MusinaNazarov,
    AUTHOR = {Musina, R. and Nazarov, A.I.},
     TITLE = {Fractional operators as traces of operator-valued curves},
   JOURNAL = {J. Funct. Anal.},
  FJOURNAL = {Journal of Functional Analysis},
    VOLUME = {287},
      YEAR = {2024},
    NUMBER = {2},
     PAGES = {Paper No. 110443, 33},
      ISSN = {0022-1236,1096-0783},
   MRCLASS = {34G10 (35R11 46E40)},
  MRNUMBER = {4735195},
MRREVIEWER = {Vincenzo\ Ambrosio},
       DOI = {10.1016/j.jfa.2024.110443},
}

@article {MR4429579,
    AUTHOR = {Cora, G. and Musina, R.},
     TITLE = {The {$s$}-polyharmonic extension problem and higher-order
              fractional {L}aplacians},
   JOURNAL = {J. Funct. Anal.},
  FJOURNAL = {Journal of Functional Analysis},
    VOLUME = {283},
      YEAR = {2022},
    NUMBER = {5},
     PAGES = {Paper No. 109555, 33},
      ISSN = {0022-1236,1096-0783},
   MRCLASS = {26A33 (35J70 35R11 46E35)},
  MRNUMBER = {4429579},
MRREVIEWER = {Zhuoran\ Du},
       DOI = {10.1016/j.jfa.2022.109555},
}

@article {MR3773814,
    AUTHOR = {Chen, Y.K. and Lei, Z. and Wei, C.H.},
     TITLE = {Extension problems related to the higher order fractional {L}aplacian},
   JOURNAL = {Acta Math. Sin. (Engl. Ser.)},
  FJOURNAL = {Acta Mathematica Sinica (English Series)},
    VOLUME = {34},
      YEAR = {2018},
    NUMBER = {4},
     PAGES = {655--661},
      ISSN = {1439-8516,1439-7617},
   MRCLASS = {35R11 (35P30 42B37)},
  MRNUMBER = {3773814},
MRREVIEWER = {Phi\ Long\ Le},
       DOI = {10.1007/s10114-017-7325-6},
}

@article {MR4742773,
    AUTHOR = {Biswas, A. and Stinga, P.R.},
     TITLE = {Sharp extension problem characterizations for higher
              fractional power operators in {B}anach spaces},
   JOURNAL = {J. Funct. Anal.},
  FJOURNAL = {Journal of Functional Analysis},
    VOLUME = {287},
      YEAR = {2024},
    NUMBER = {3},
     PAGES = {Paper No. 110474, 27},
      ISSN = {0022-1236,1096-0783},
   MRCLASS = {35R11 (26A33 35A02 35C15 35S05 47D06)},
  MRNUMBER = {4742773},
MRREVIEWER = {Christopher\ Steven\ Goodrich},
       DOI = {10.1016/j.jfa.2024.110474},
}

@misc{1302.4413,
Author = {Yang, R.},
Title = {On higher order extensions for the fractional {L}aplacian},
Year = {2013},
note = {arXiv:1302.4413},
}

@article {MR247668,
    AUTHOR = {Mol\v{c}anov, S.A. and Ostrovski\u{i}, E.},
     TITLE = {Symmetric stable processes as traces of degenerate diffusion processes},
   JOURNAL = {Teor. Verojatnost. i Primenen.},
  FJOURNAL = {Akademija Nauk SSSR. Teorija Verojatnoste\u{i}\ i ee Primenenija},
    VOLUME = {14},
      YEAR = {1969},
     PAGES = {127--130},
      ISSN = {0040-361x},
   MRCLASS = {60.62},
  MRNUMBER = {247668},
MRREVIEWER = {J.\ G.\ Wendel},
}

@article {MR2354493,
    AUTHOR = {Caffarelli, L. and Silvestre, L.},
     TITLE = {An extension problem related to the fractional {L}aplacian},
   JOURNAL = {Comm. Partial Differential Equations},
  FJOURNAL = {Communications in Partial Differential Equations},
    VOLUME = {32},
      YEAR = {2007},
    NUMBER = {7-9},
     PAGES = {1245--1260},
      ISSN = {0360-5302,1532-4133},
   MRCLASS = {35J70},
  MRNUMBER = {2354493},
MRREVIEWER = {Francesco\ Petitta},
       DOI = {10.1080/03605300600987306},
}

@article {MR3989717,
    AUTHOR = {Banjai, L. and Melenk, J.M. and Nochetto, R.H. and Ot\'arola, E. and Salgado, A.J. and Schwab, C.},
     TITLE = {Tensor {FEM} for spectral fractional diffusion},
   JOURNAL = {Found. Comput. Math.},
  FJOURNAL = {Foundations of Computational Mathematics. The Journal of the
              Society for the Foundations of Computational Mathematics},
    VOLUME = {19},
      YEAR = {2019},
    NUMBER = {4},
     PAGES = {901--962},
      ISSN = {1615-3375,1615-3383},
   MRCLASS = {65N30 (26A33 35R11 65N12)},
  MRNUMBER = {3989717},
       DOI = {10.1007/s10208-018-9402-3},
}

@article {MR3348172,
    AUTHOR = {Nochetto, R.H. and Ot\'arola, E. and Salgado, A.J.},
     TITLE = {A {PDE} approach to fractional diffusion in general domains: a
              priori error analysis},
   JOURNAL = {Found. Comput. Math.},
  FJOURNAL = {Foundations of Computational Mathematics. The Journal of the
              Society for the Foundations of Computational Mathematics},
    VOLUME = {15},
      YEAR = {2015},
    NUMBER = {3},
     PAGES = {733--791},
      ISSN = {1615-3375,1615-3383},
   MRCLASS = {65N30 (35S11 65N12)},
  MRNUMBER = {3348172},
MRREVIEWER = {Patrick\ Henning},
       DOI = {10.1007/s10208-014-9208-x},
}

@misc{NIST:DLMF,
         key = "{\relax DLMF}",
       title = "{\it NIST Digital Library of Mathematical Functions}",
howpublished = "\url{https://dlmf.nist.gov/}, Release 1.2.4 of 2025-03-15",
         url = "https://dlmf.nist.gov/",
        note = "F.~W.~J. Olver, A.~B. {Olde Daalhuis}, D.~W. Lozier, B.~I. Schneider,
                R.~F. Boisvert, C.~W. Clark, B.~R. Miller, B.~V. Saunders,
                H.~S. Cohl, and M.~A. McClain, eds."
}

@book {MR2373954,
    AUTHOR = {Brenner, S.C. and Scott, L.R.},
     TITLE = {The mathematical theory of finite element methods},
    SERIES = {Texts in Applied Mathematics},
    VOLUME = {15},
   EDITION = {Third},
 PUBLISHER = {Springer, New York},
      YEAR = {2008},
     PAGES = {xviii+397},
      ISBN = {978-0-387-75933-3},
   MRCLASS = {65-01 (65-02)},
  MRNUMBER = {2373954},
       DOI = {10.1007/978-0-387-75934-0},
       URL = {https://doi.org/10.1007/978-0-387-75934-0},
}

@article {MR1266026,
    AUTHOR = {Laghchim-Lahlou, M. and Sablonni\`ere, P.},
     TITLE = {Triangular finite elements of {HCT} type and class {$C^\rho$}},
   JOURNAL = {Adv. Comput. Math.},
  FJOURNAL = {Advances in Computational Mathematics},
    VOLUME = {2},
      YEAR = {1994},
    NUMBER = {1},
     PAGES = {101--122},
      ISSN = {1019-7168,1572-9044},
   MRCLASS = {65D07 (65N30)},
  MRNUMBER = {1266026},
MRREVIEWER = {Gheorghe\ Micula},
       DOI = {10.1007/BF02519038},
       URL = {https://doi.org/10.1007/BF02519038},
}

@article {MR3163246,
    AUTHOR = {Walkington, N.J.},
     TITLE = {A {$C^1$} tetrahedral finite element without edge degrees of
              freedom},
   JOURNAL = {SIAM J. Numer. Anal.},
  FJOURNAL = {SIAM Journal on Numerical Analysis},
    VOLUME = {52},
      YEAR = {2014},
    NUMBER = {1},
     PAGES = {330--342},
      ISSN = {0036-1429,1095-7170},
   MRCLASS = {65N30},
  MRNUMBER = {3163246},
MRREVIEWER = {Gerard\ Awanou},
       DOI = {10.1137/130912013},
       URL = {https://doi.org/10.1137/130912013},
}

@incollection {MR1115237,
    AUTHOR = {Ciarlet, P.G.},
     TITLE = {Basic error estimates for elliptic problems},
 BOOKTITLE = {Handbook of numerical analysis, {V}ol.\ {II}},
    SERIES = {Handb. Numer. Anal.},
    VOLUME = {II},
     PAGES = {17--351},
 PUBLISHER = {North-Holland, Amsterdam},
      YEAR = {1991},
      ISBN = {0-444-70365-9},
   MRCLASS = {65-02 (65N15 65N30)},
  MRNUMBER = {1115237},
}

@article {MR431742,
    AUTHOR = {Baker, G.A.},
     TITLE = {Finite element methods for elliptic equations using
              nonconforming elements},
   JOURNAL = {Math. Comp.},
  FJOURNAL = {Mathematics of Computation},
    VOLUME = {31},
      YEAR = {1977},
    NUMBER = {137},
     PAGES = {45--59},
      ISSN = {0025-5718,1088-6842},
   MRCLASS = {65N30},
  MRNUMBER = {431742},
MRREVIEWER = {Reinhard\ Scholz},
       DOI = {10.2307/2005779},
       URL = {https://doi.org/10.2307/2005779},
}

@book {MR3309171,
    AUTHOR = {Bartels, S.},
     TITLE = {Numerical methods for nonlinear partial differential
              equations},
    SERIES = {Springer Series in Computational Mathematics},
    VOLUME = {47},
 PUBLISHER = {Springer, Cham},
      YEAR = {2015},
     PAGES = {x+393},
      ISBN = {978-3-319-13796-4; 978-3-319-13797-1},
   MRCLASS = {65-01 (35A15 35A35 65Mxx 65Nxx)},
  MRNUMBER = {3309171},
MRREVIEWER = {Karsten\ Urban},
       DOI = {10.1007/978-3-319-13797-1},
       URL = {https://doi.org/10.1007/978-3-319-13797-1},
}

@article {MR4855619,
    AUTHOR = {Bonnet, G. and Cangiani, A. and Nochetto, R.H.},
     TITLE = {Conforming virtual element method for nondivergence form
              linear elliptic equations with {C}ordes coefficients},
   JOURNAL = {Math. Models Methods Appl. Sci.},
  FJOURNAL = {Mathematical Models and Methods in Applied Sciences},
    VOLUME = {35},
      YEAR = {2025},
    NUMBER = {1},
     PAGES = {75--112},
      ISSN = {0218-2025,1793-6314},
   MRCLASS = {65N30 (65N12)},
  MRNUMBER = {4855619},
       DOI = {10.1142/S0218202525500034},
       URL = {https://doi.org/10.1142/S0218202525500034},
}

@article {MR2142191,
    AUTHOR = {Brenner, S.C. and Sung, L.-Y.},
     TITLE = {{$C^0$} interior penalty methods for fourth order elliptic
              boundary value problems on polygonal domains},
   JOURNAL = {J. Sci. Comput.},
  FJOURNAL = {Journal of Scientific Computing},
    VOLUME = {22/23},
      YEAR = {2005},
     PAGES = {83--118},
      ISSN = {0885-7474,1573-7691},
   MRCLASS = {65N30},
  MRNUMBER = {2142191},
       DOI = {10.1007/s10915-004-4135-7},
       URL = {https://doi.org/10.1007/s10915-004-4135-7},
}

@book {MR3396210,
    AUTHOR = {Grisvard, P.},
     TITLE = {Elliptic problems in nonsmooth domains},
    SERIES = {Classics in Applied Mathematics},
    VOLUME = {69},
      NOTE = {Reprint of the 1985 original [MR0775683],
              With a foreword by Susanne C. Brenner},
 PUBLISHER = {Society for Industrial and Applied Mathematics (SIAM),
              Philadelphia, PA},
      YEAR = {2011},
     PAGES = {xx+410},
      ISBN = {978-1-611972-02-3},
   MRCLASS = {35J25 (01A75 35-02)},
  MRNUMBER = {3396210},
       DOI = {10.1137/1.9781611972030.ch1},
       URL = {https://doi.org/10.1137/1.9781611972030.ch1},
}

\end{document}